\newtheorem{theorem}{Theorem}
\newtheorem{lemma}[theorem]{Lemma}
\newtheorem{corollary}[theorem]{Corollary}
\newtheorem{remark}[theorem]{Remark}
\title{A note on total co-independent domination in trees}
\author{Abel Cabrera Mart\'inez$^{(1)}$, Frank A. Hern\'andez Mira $^{(2)}$\\ Jos\'e M. Sigarreta Almira$^{(2)}$ and Ismael G. Yero$^{(3)}$\\
\\
$^{(1)}${\small Universitat Rovira i Virgili,}
{\small Departament d'Enginyeria Inform\`atica i Matem\`atiques } \\  {\small Av. Pa\"{\i}sos
Catalans 26, 43007 Tarragona, Spain.} \\{\small
  abel.cabrera\@@urv.cat}\\
$^{(2)}$ {\small Universidad Aut\'onoma de Guerrero, Facultad de  Matem\'{a}ticas}\\
{\small Carlos E. Adame 5, Col. La Garita, Acapulco, Guerrero, Mexico.}\\
{\small fmira8906\@@gmail.com}, {\small josemariasigarretaalmira\@@hotmail.com}\\
$^{(3)}$ {\small Universidad de C\'adiz,}
{\small Departamento de Matem\'aticas}\\{\small Av. Ram\'on Puyol s/n, 11202 Algeciras, Spain.}\\ {\small
ismael.gonzalez\@@uca.es}
}
\date{}
\begin{document}

\maketitle

\begin{abstract}
\noindent
A set $D$ of vertices of a graph $G$ is a total dominating set if every vertex of $G$ is adjacent to at least one vertex of $D$. The total domination number of $G$ is the minimum cardinality of any total dominating set of $G$ and is denoted by $\gamma_t(G)$. The total dominating set $D$ is called a total co-independent dominating set if $V(G)\setminus D$ is an independent set and has at least one vertex. The minimum cardinality of any total co-independent dominating set is denoted by $\gamma_{t,coi}(G)$. In this paper, we show that, for any tree $T$ of order $n$ and diameter at least three, $n-\beta(T)\leq \gamma_{t,coi}(T)\leq n-|L(T)|$ where $\beta(T)$ is the maximum cardinality of any independent set and $L(T)$ is the set of leaves of $T$. We also characterize the families of trees attaining the extremal bounds above and show that the differences between the value of $\gamma_{t,coi}(T)$ and these bounds can be arbitrarily large for some classes of trees.
\end{abstract}

{\it Keywords:} total domination number; independence number; total co-independent domination number; trees.

{\it AMS Subject Classification Numbers:} 05C69; 05C05

\section{Introduction} \label{Intro}

Throughout this work we consider $G=(V(G),E(G))$ as a simple graph of order $n=|V(G)|$. That is, graphs that are finite, undirected, and without loops or multiple edges. Given a vertex $v$ of $G$, $N_G(v)$ represents the \emph{open neighborhood} of $v$, \emph{i.e.}, the set of all neighbors of $v$ in $G$ and the \emph{degree} of $v$ is $\delta(v) = |N_G(v)|$. If $S\subseteq V(G)$, then the \emph{open neighborhood} of $S$ is $N_G(S)=\cup_{v\in S} N_G(v)$. The \emph{minimum} and \emph{maximum degrees} of $G$ are denoted by $\delta(G)$ and $\Delta(G)$, respectively. For any two vertices $u$ and $v$, the \emph{distance} $d(u,v)$ between $u$ and $v$ is the length of a shortest $u-v$ path. The \emph{diameter} of $G$ is largest possible distance between any two vertices of $G$, and is denoted by $diam(G)$. For any other graph theory terminology we follow the book \cite{west}.

\noindent
A \emph{dominating set} of a graph $G$ is a set of vertices $D\subseteq V(G)$ such that every vertex of $G$ not in $D$ is adjacent to at least a vertex in $D$. The \emph{domination number} of $G$ is the minimum cardinality of a dominating set of $G$ and is denoted by $\gamma(G)$. Domination in graphs is a classical topic, and nowadays one of the most active areas, of research in graph theory. This fact can be seen for instance through the more than 1620 articles published in the topic (more than 1070 of them in the last 10 years), according to the MathSciNet database with the queries: ``domination number'' or ``dominating sets''. The two books \cite{book-dom-1,book-dom-2}, although a little not updated by now, contain a significant amount of the most important results in the topic before this new century.

\noindent
One interesting research activity on domination in graphs concerns its relationship with other graph's parameters. A remarkable case regards vertex independence, \emph{i.e.}, sets of vertices inducing an edgeless graph. The most natural relationship in this direction is clearly the mixing of both concepts which gives raise to the independent domination number, \emph{i.e.}, the minimum cardinality of a dominating set inducing an edgeless graph. Independent domination in graphs was formally introduced in \cite{berge} and \cite{ore} at earliest 1960's (a fairly complete survey on this topic was recently published in \cite{indep-dom-surv}). On the other hand, some other investigations connecting domination and independence in graphs can easily be found in the literature. We remark here two of them. The first one remarkable case is the searching of two disjoint sets in a graph, in which one of the sets is a maximal independent set and the second one a minimal dominating set (a particular case appears whether both sets form a partition of the vertex set of the graph). Several studies on this topic have been developed in the last years. The Ph. D. thesis \cite{Lowenstein} contains several results and citations on the subject. A second case is related to finding the minimum cardinality of a dominating set which intersects every maximal independent set of a graph: independent transversal domination (see \cite{hamid}, where the theme was introduced, and also \cite{yero}.)

\noindent
Other popular studies on domination in graphs deal with modifying the domination property. One of the most popular parameters in this way is the total domination, which differs from the standard domination by the added property of dominating all the vertices of the graph, instead of only that outside of the set. It is then not surprising that total domination has been also related to independence in graphs. For instance, the total domination version of independent transversal domination is already known from \cite{yero2, ITTD-tree}. However, not much exists concerning finding two disjoint sets in which one of the sets is a total dominating set, and the other one, an independent set. A closely related idea to this one above was recently published in \cite{Soner2012}, where the concept of total co-independent dominating set was introduced. Such work contains just a few and not deep results. However, the idea of the concept is interesting and deserves a more detailed study.

\noindent
More formally, a set $D\subseteq V(G)$ is a \emph{total dominating set} of $G$ if every vertex in $V(G)$ is adjacent to at least one vertex in $D$.  The \emph{total domination number} of $G$ is the minimum cardinality of any total dominating set of $G$ and is denoted by $\gamma_t(G)$. A $\gamma_t(G)$-\emph{set} is a total dominating set of cardinality $\gamma_t(G)$. For more information on total domination see the survey \cite{Henning2009}. On the other hand, a set $S$ of vertices is \emph{independent} if $S$ induces an empty graph. An independent set of maximum cardinality is a \emph{maximum independent set} of $G$. The \emph{independence number} of $G$ is the cardinality of a maximum independent set of $G$ and is denoted by $\beta(G)$. An independent set of cardinality $\beta(G)$ is called a $\beta(G)$-\emph{set}.

\noindent
A total dominating set $D$ of a graph $G$ is called a \emph{total co-independent dominating set} if the set of vertices of the subgraph induced by $V(G)\setminus D$ is independent and not empty. The minimum cardinality of any total co-independent dominating set is denoted by $\gamma_{t,coi}(G)$. A total co-independent dominating set of cardinality $\gamma_{t,coi}(G)$ is a $\gamma_{t,coi}(G)$-\emph{set}. These concepts were previously introduced and barely studied in \cite{Soner2012}, and also recently and deeper, in \cite{Cabrera2017-1}. A slightly different version of this parameter was introduced in \cite{k2}, and also recently studied in \cite{MPSY}, where the condition of $V(G)\setminus D$ being not empty was not required. In this latter works, the total co-independent domination number is called total outer-independent domination number. Even so, both parameters behaves almost always in the same manner, we prefer to continue using the terminology of \cite{Soner2012}. Clearly, the results on this parameter would lead to deduce some conclusions into the existence of partitions of the vertex set of a graph into a total dominating set and an independent set, as already mentioned.

\noindent
Let $T$ be a tree. A \emph{leaf} of $T$ is a vertex of degree one. A \emph{support vertex} of $T$ is a vertex adjacent to a leaf which is not a leaf, and a \emph{semi-support vertex} is a vertex adjacent to a support vertex that is not a leaf. By an \emph{isolated support vertex} of $T$ we mean an isolated vertex of the subgraph induced by the support vertices of $T$. The set of leaves is denoted by $L(T)$, the set of support vertices is denoted by $S(T)$, and the set of semi-support vertices is denoted by $SS(T)$. Moreover, $S^\ast(T)$ is the set of isolated support vertices of $T$. Also, a \emph{double star} is a tree with exactly two adjacent vertices of degree larger than or equal to two and the remaining vertices are leaves.

\noindent
Studies on characterizing domination related parameters in trees have been very popular in the last decade. One can find in the literature several works showing all the trees satisfying diverse properties. For instance, to just name a few of them, we remark some examples, which could probably be not all the most remarkable and/or recent cases.
\begin{itemize}
\item In \cite{Chellali2006} was proved that for any tree $T$ of order $n$ and $l$ leaves, $\gamma_t(T)\ge (n-l+2)/2$, and all the trees achieving such bound are given.
\item In \cite{shan}, a characterization of the family of trees with equal total domination and paired-domination numbers was given.
\item In \cite{harary}, a characterization of trees with equal domination and independent domination numbers was presented.
\item In \cite{domke} was proved that the restrained domination number of a tree $T$ of order $n$ is bounded below by $\lceil(n+2)/3\rceil$ and all the extremal trees achieving this lower bound were constructively characterized.
\item In \cite{alvarado}, a constructive characterization of the trees for which the Roman domination
number strongly equals the weak Roman domination number was given.
\item In \cite{hen-rall} were characterized the trees with equal total domination and game total domination number.
\item In \cite{Cabrera2018}, a constructive characterization of vertex cover Roman trees was given, that is, trees whose outer independent Roman domination number equals twice its vertex cover number.
\item In \cite{Cabrera2020}, three different characterizations concerning weak Roman domination in trees were presented.
\end{itemize}
Other styles of characterizations of domination parameters in trees were presented in \cite{dorfling,hattingh}. In this sense, in the present work we pretend to improve the visibility of this new parameter, namely the total co-independent domination number, throughout characterizing several families of trees achieving some specific values of this mentioned parameter.

\noindent
The total co-independent domination number of a graph $G$ has been introduced in \cite{Soner2012}, where a few of its combinatorial properties were dealt with. Among them, a couple of almost trivial bounds in terms of $\beta(G)$ and the order of $G$ were proved for $\gamma_{t,coi}(G)$. As example, for any graph $G$ of order $n$, $n-\beta(G)\leq \gamma_{t,coi}(G)\leq n-1$. It is readily seen that such bounds could be improved in a number of situations. For instance, in the case of a tree $T$, this can be done for the upper bound as we next show.

\begin{theorem}\label{teo1}
For any tree $T$ of order $n$  and $diam(T)\geq 3$, $n-\beta(T)\leq \gamma_{t,coi}(T)\leq n-|L(T)|.$
\end{theorem}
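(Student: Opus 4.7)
My plan is to dispose of the two inequalities separately, handling the lower bound in one line and devoting the bulk of the argument to exhibiting an explicit total co-independent dominating set of size $n-|L(T)|$.

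For the lower bound, I would simply invoke the definition: if $D$ is any total co-independent dominating set of $T$, then by construction $V(T)\setminus D$ is independent, hence $|V(T)\setminus D|\le \beta(T)$, which rearranges to $|D|\ge n-\beta(T)$. Taking the minimum over all such $D$ gives the desired inequality. This step is essentially for free and does not use anything special about trees or the diameter hypothesis.

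For the upper bound, the natural candidate is $D:=V(T)\setminus L(T)$, which has cardinality exactly $n-|L(T)|$, so the whole task reduces to verifying that $D$ is actually a total co-independent dominating set. The complement $V(T)\setminus D = L(T)$ is independent (two leaves cannot be adjacent in a connected graph on more than two vertices, and $\mathrm{diam}(T)\ge 3$ forces $n\ge 4$), and it is nonempty since every tree on at least two vertices has leaves. It remains to check the total domination condition, which splits into two cases: each leaf is adjacent to its support vertex, and the support vertex lies in $D$ by definition; each non-leaf vertex $v$ needs a neighbor in $D$, i.e., a non-leaf neighbor.

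The only real content of the proof is this last claim, so I would isolate it as the key observation: in a tree with $\mathrm{diam}(T)\ge 3$, every non-leaf vertex has at least one non-leaf neighbor. I would prove it by contradiction; if some non-leaf $v$ had only leaf neighbors, then every vertex of $T$ would be either $v$ or at distance $1$ from $v$, and so every pair of vertices would be at distance at most $2$, contradicting $\mathrm{diam}(T)\ge 3$. This is the place where the hypothesis $\mathrm{diam}(T)\ge 3$ is essentially used; without it the bound genuinely fails (e.g.\ for a star). Once this observation is in hand, both required properties of $D$ follow immediately and the proof is complete.
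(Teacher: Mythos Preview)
Your proof is correct and follows essentially the same approach as the paper: the lower bound comes directly from the definition (the paper cites \cite{Soner2012} for this), and the upper bound is obtained by exhibiting $V(T)\setminus L(T)$ as a total co-independent dominating set, using $\mathrm{diam}(T)\ge 3$ to guarantee total domination. You have simply supplied the details the paper leaves implicit.
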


\begin{proof}
The lower bound was already given in \cite{Soner2012}. Moreover, the upper bound follows by the fact that the set $V(T)\setminus L(T)$ is a total co-independent dominating set, since $T$ has diameter at least three.
%In order to prove the upper bound we first notice that if $v$ is a support vertex of a tree $T$, then $v$ is in every $\gamma_{t,coi}(T)$-set. If not, there is a support $w$ which belongs to the complement $\overline{S}=V\setminus S$ of a $\gamma_{t,coi}(T)$-set $S$ and since $\overline{S}$ is independent, no leaf adjacent to $w$ belongs to $\overline{S}$. But then, such leaves are not dominated by $S$, which is not possible. Clearly each support vertex needs to have one of its neighbors (which could be a leaf or not) together with it inside the $\gamma_{t,coi}(T)$-set. Hence, it is easy to observe that, as $diam(T)\geq 3$, there exists a $\gamma_{t,coi}(T)$-set containing no leaves of $T$. As a consequence, the set $V(T)\setminus L(T)$ always contains a $\gamma_{t,coi}(T)$-set, which leads to the upper bound.
\end{proof}

\noindent
One can immediately think into characterizing the trees achieving the bounds for the total co-independent domination number given above. This is made in the next section. In concordance with this, now on we assume that $|S(T)|\geq 2$ since the case $diam(T)=1$ ($T$ is a $P_2$ and $\gamma_{t,coi}(T)$ is not defined) and  $diam(T)=2$ ($T$ is a star graph $S_n$ and $\gamma_{t,coi}(T)=2$) are straightforward.

\noindent
In order to easily proceed with our exposition from now on we say that a tree $T$ belongs to the family $\mathcal{T}_\beta$, if $\gamma_{t,coi}(T)= n-\beta(T)$ or $T$ is in the family $\mathcal{T}_L$, if $\gamma_{t,coi}(T)= n-|L(T)|$. Note that $\mathcal{T}_L = \mathcal{T}_\beta$ if and only if $L(T)$ is a $\beta(T)$-set.

\section{The characterizations} \label{results}

\noindent
In order to provide a constructive characterization of the trees belonging to the family $\mathcal{T}_\beta$ we need to introduce some operations to be made over a tree. In this concern, by attaching a path $P$ to a vertex $v$ of $T$ we mean adding the path $P$ and joining $v$ to a vertex of $P$.

\begin{description}
  \item[Operation $O_1$:]  Attach a path $P_1$ to a vertex of $T$, which is in some $\gamma_{t,coi}(T)$-set.
  \item[Operation $O_2$:]  Attach a path $P_2$ to a vertex of $T$, which is in some $\gamma_{t,coi}(T)$-set.
  \item[Operation $O_3$:]  Attach a path $P_4$ to a vertex $v$ of $T$, which is in some $\gamma_{t,coi}(T)$-set, by joining $v$ to a leaf of $P_4$.
  %\item[Operation $O_4$]  Attach a path $P_1$ to a vertex of $T$, which is in $L_2(T)$.
  \item[Operation $O_4$:]  Attach a path $P_4$ to a vertex $v$ of $T$, which is in some $\beta(T)$-set, by joining $v$ to a support of $P_4$.
\end{description}

\noindent
Let $\mathcal{T}$ be the family of trees defined as $\mathcal{T} = \{T \mid T $ is $P_4$ or is a tree obtained from $P_4$ by a finite
sequence of operations $O_1, O_2, O_3, O_4\}$. We first show  that every tree in the family $\mathcal{T}$ belongs to the family $\mathcal{T}_\beta$.

\begin{lemma}\label{lem-right}
If $T \in \mathcal{T}$, then $T\in \mathcal{T}_\beta$.
\end{lemma}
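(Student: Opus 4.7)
The plan is induction on the number $k$ of operations applied to $P_4$ to produce $T$. For the base case $k=0$ we have $T=P_4$, and a direct computation gives $n=4$, $\beta(T)=2$, and $\gamma_{t,coi}(T)=2$ (the two internal vertices form a minimum total co-independent dominating set), so $\gamma_{t,coi}(P_4)=n-\beta(P_4)$ and $P_4\in\mathcal{T}_\beta$.

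For the inductive step, assume $T$ arises from some $T'\in\mathcal{T}$ by one of the four operations and that $\gamma_{t,coi}(T')=n'-\beta(T')$ by the inductive hypothesis. Theorem~\ref{teo1} already supplies $\gamma_{t,coi}(T)\ge n-\beta(T)$, so it suffices to exhibit a total co-independent dominating set of $T$ of size at most $n-\beta(T)$. The approach is to take a $\gamma_{t,coi}(T')$-set $D'$ -- chosen to contain $v$ in $O_1, O_2, O_3$, which is permitted by those operations' attaching-vertex hypothesis -- and to augment it by a few vertices of the attached path: set $D:=D'$ in $O_1$; $D:=D'\cup\{u_1\}$ in $O_2$, where $P_2=u_1u_2$ is joined to $v$ via $u_1$; and $D:=D'\cup\{u_2,u_3\}$ in both $O_3$ and $O_4$, where $P_4=u_1u_2u_3u_4$ is joined to $v$ via $u_1$ in $O_3$ and via $u_2$ in $O_4$.

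That each such $D$ is a total co-independent dominating set of $T$ is a routine neighborhood check: every added vertex of $D$ has a neighbor in $D$, every new leaf of $T$ has its unique neighbor in $D$, and $V(T)\setminus D$ remains independent. The role of choosing $v\in D'$ in $O_1, O_2, O_3$ is precisely to supply the missing adjacency in those cases (to dominate the new leaf in $O_1$, to totally dominate $u_1\in D$ in $O_2$, and to prevent the edge $vu_1$ from having both endpoints in $V(T)\setminus D$ in $O_3$); in $O_4$ no such constraint on $v$ is needed because the new leaves $u_1, u_4$ of $T$ have their only neighbors in $\{u_2,u_3\}\subseteq D$, and any $v\in V(T')\setminus D'$ retains all its original neighbors in $D'$ by the independence of $V(T')\setminus D'$ in $T'$. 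This yields $\gamma_{t,coi}(T)\le|D'|+j=n'-\beta(T')+j$, where $j\in\{0,1,2,2\}$ is the number of added vertices.

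To close the induction, any independent set of $T$ decomposes as the union of an independent set of $T'$ and an independent set of the attached $P_\ell$, so $\beta(T)\le\beta(T')+\beta(P_\ell)$; since $\beta(P_\ell)=\ell-j$ in each case (namely $1, 1, 2, 2$), combining this with the previous inequality gives $\gamma_{t,coi}(T)\le n-\beta(T)$, and equality then follows from Theorem~\ref{teo1}. The main obstacle I anticipate is the independence and total-domination bookkeeping in the verification that $D$ is a total co-independent dominating set, which is precisely what dictates the specific attaching-vertex conditions imposed in each operation.
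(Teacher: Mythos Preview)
Your proposal is correct and follows essentially the same approach as the paper: induction on the number of operations, with the same augmented sets $D$ in each case, the same appeal to Theorem~\ref{teo1} for the lower bound, and the same verification that $D$ is a total co-independent dominating set. The only cosmetic difference is that you bound $\beta(T)$ via the general inequality $\beta(T)\le\beta(T')+\beta(P_\ell)$, whereas the paper instead argues directly that $B'\cup\{\text{new leaves}\}$ is a $\beta(T)$-set; both yield the same conclusion after applying Theorem~\ref{teo1}.
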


\begin{proof}
\noindent
We proceed by induction on the number $r(T)$ of operations required to construct the tree $T$. If $r(T)=0$, then $T=P_4$ and $T \in \mathcal{T}_\beta$.  This establishes the base case. Hence, we now  assume that $k\geq 1$ is an integer and that each tree $T' \in \mathcal{T}$ with $ r(T')<k$ satisfies that $T'\in \mathcal{T}_\beta$.
Let $T \in \mathcal{T}$ be a tree with $r(T)=k$. Then, $T$ can be obtained from a tree $T' \in \mathcal{T}$ with $ r(T')=k-1$ by one of the operations $O_1, O_2, O_3$ or $O_4$. We shall prove that $T \in \mathcal{T}_\beta$. To this end, as $T'\in \mathcal{T_\beta}$, let $D'$ be a $\gamma_{t,coi}(T')$-set containing no leaves and let $B'=V(T')\setminus D'$ be a $\beta(T')$-set containing all leaves. We consider the following situations.\\

\noindent
\textbf{Case 1.} $T$ is obtained from $T'$ by operation $O_1$. Assume $T$ is obtained from $T'$ by adding the vertex $u$ and the edge $uv$, where $v\in D'$. Notice that $u$ is a leaf of $T$ and that $B'\cup \{u\}$ is a $\beta(T)$-set, otherwise there would be an independent set in $T'$ with one vertex more than in $B'$, since only one vertex has been added to $T'$ (to obtain $T$) that would increase the value for $\beta(T)$ in more than one. On the other hand, $D'$ is a total co-independent dominating set of $T$. Thus, $\gamma_{t,coi}(T)\le |D'|= |V(T)|-(\beta(T')+1)=|V(T)|-\beta(T)$, and by Theorem \ref{teo1}, $\gamma_{t,coi}(T)=|V(T)|-\beta(T)$, which means $T\in T_\beta$.\\

\noindent
\textbf{Case 2.} $T$ is obtained from $T'$ by operation $O_2$. Assume $T$ is obtained from $T'$ by adding the path $u_1u_2$ and the edge $u_1v$ where $v\in D'$. Note that the vertex $u_2$ is a leaf of $T$, and that $u_1$ is its support vertex. So, $B'\cup\{u_2\}$ is a $\beta(T)$-set, otherwise there would be an independent set in $T'$ with one vertex more than in $B'$, since only one vertex from the pair $u_1,u_2$ could be added to any independent set of $T'$ to get an independent set of $T$ of larger cardinality than that of $B'$. It is now not difficult to see that $D = D' \cup \{u_1\}$ is a total co-independent dominating set of $T$. Hence, $\gamma_{t,coi}(T)\le |D|=|D'| + 1=|V(T')|-\beta(T') + 1=|V(T)|-2 -(\beta(T)-1)+1=|V(T)|-\beta(T)$. Therefore, by Theorem \ref{teo1}, $\gamma_{t,coi}(T)= |V(T)|-\beta(T)$ and $T \in \mathcal{T}_\beta$.\\

\noindent
\textbf{Case 3.} $T$ is obtained from $T'$ by operation $O_3$. Assume $T$ is obtained from $T'$ by adding the path $P_4=h_1u_1u_2h_2$ to a vertex $v\in D'$ through the edge $vh_1$. By using some similar reasons as in the case above, it is easily seen that $B=B'\cup \{h_1,h_2\}$ is a $\beta(T)$-set and that $D=D' \cup \{u_1,u_2\}$ is a total co-independent dominating set of $T$. So, $\gamma_{t,coi}(T)\le |D|=|D'|+2=|V(T')|-\beta(T')+2=(|V(T)|-4)-(\beta(T)-2)+2=|V(T)|-\beta(T)$. Therefore, by Theorem \ref{teo1}, $\gamma_{t,coi}(T)= |V(T)|-\beta(T)$ and $T \in \mathcal{T}_\beta$.\\

%\noindent
%\textbf{Case 4.} $T$ is obtained from $T'$ by operation $O_4$. Suppose $T$ is obtained from $T'$ by adding a vertex $u$ and the edge $uv$ where $v\in L_2(T')$. Let $s$ be the support adjacent to $v$ in $T'$. First we notice that $v\in B'$ (moreover, $v$ belongs to every $\beta(T')$-set), since $v\in L_2(T')$. Now, note that $B=B' \cup \{u\}-\{v\}$ is an independent set in $T$. Suppose $B$ is not a $\beta(T)$-set and let $B''$ be a $\beta(T)$-set. In this sense, it must happen $u\in B''$, otherwise it is not possible to get a larger independent set than $B$ in $T$. So, also $v\notin B''$. If $s\notin B''$, then $B''-\{u\}\cup \{v\}$ is a $\beta(T)$-set and it must happen that $|B''|=|B'|$, due to the structure of $B''$ not containing $s$. However, this is not possible since $|B|=|B'|$. Now, if $s\in B''$, then all its neighbors do not belong to $B''$. Since, $s$ has at least two leaf neighbors, the set $B''-\{s\}\cup (N_T(s)\cap L(T))$ has larger cardinality than $B''$ and it is independent, which is again not possible. As a consequence, $B$ is a $\beta(T)$-set.

%\noindent
%It is then clearly to see that $D=D'\cup \{v\}$ is a total co-independent dominating set since $s\in D'$. Hence, $\gamma_{t,coi}(T)\le|D|=|D'| + 1=|V(T')|-\beta(T') + 1=|V(T)|-1 -\beta(T)+1=|V(T)|-\beta(T)$. Therefore, as in the previous cases, by Theorem \ref{teo1}, $\gamma_{t,coi}(T)= |V(T)|-\beta(T)$ and $T \in \mathcal{T}_\beta$.\\

\noindent
\textbf{Case 4.} $T$ is obtained from $T'$ by operation $O_4$. Assume $T$ is obtained from $T'$ by adding a path $P_4=h_1u_1u_2h_2$ to a vertex $v$ of $T'$, which is in some $\beta(T')$-set, throughout the edge $vu_1$. Note that the set $B=B' \cup \{h_1,h_2\}$ is an independent set of $T$. Moreover, since there can be at most two vertices of the path $P_4$ in any $\beta(T)$-set, it must happen that $B$ is a $\beta(T)$-set, otherwise there would be an independent set in $T'$ of cardinality larger than $\beta(T')$, which is not possible. Now, it is readily seen that $D=D'\cup \{u_1,u_2\}$ is a total co-independent dominating set in $T$. Thus, $\gamma_{t,coi}(T)\le |D|=|D'|+2=|V(T')|-\beta(T')+2=(|V(T)|-4)-(\beta(T)-2)+2=|V(T)|-\beta(T)$. Therefore, again as above, by Theorem \ref{teo1}, $\gamma_{t,coi}(T)= |V(T)|-\beta(T)$ and $T \in \mathcal{T}_\beta$.
\end{proof}

\noindent
We now turn our attention to the opposite direction concerning the lemma above. In this sense, from now on we shall need the following terminology and notation in our results. Given a tree $T$ and a set $S\subsetneq V(T)$, by $T-S$ we denote a tree obtained from $T$ by removing from $T$ all the vertices in $S$ and all its incident edges (if $S=\{v\}$ for some vertex $v$, then we simply write $T-v$). For an integer $r\geq 2$, by $Q_r$ we mean a graph which is obtained from a path $P_{r+2}=v s s_1 s_2 \ldots s_r$ by attaching a path $P_1$ to any vertex of $P_{r+2}-v$. In Figure \ref{figure-1} we show the example of $Q_5$.

\begin{figure}[h]
\centering
\begin{tikzpicture}[scale=.6, transform shape]
\node [draw, shape=circle] (s) at  (0,0) {};
\node at (0.5,-0.5) {\Large $s$};
\node [draw, shape=circle] (s1) at  (0,1.5) {};
\node [draw, shape=circle] (s2) at  (0,-1.5) {};
\node at (0.5,-1.5) {\Large $v$};

\node [draw, shape=circle] (a1) at  (1.5,0) {};
\node at (1.5,-0.5) {\Large $s_1$};
\node [draw, shape=circle] (a11) at  (1.5,1.5) {};

\node [draw, shape=circle] (a2) at  (3,0) {};
\node at (3,-0.5) {\Large $s_2$};
\node [draw, shape=circle] (a21) at  (3,1.5) {};

\node [draw, shape=circle] (a3) at  (4.5,0) {};
\node at (4.5,-0.5) {\Large $s_3$};
\node [draw, shape=circle] (a31) at  (4.5,1.5) {};

\node [draw, shape=circle] (a4) at  (6,0) {};
\node at (6,-0.5) {\Large $s_4$};
\node [draw, shape=circle] (a41) at  (6,1.5) {};

\node [draw, shape=circle] (a5) at  (7.5,0) {};
\node at (7.5,-0.5) {\Large $s_5$};
\node [draw, shape=circle] (a51) at  (7.5,1.5) {};

\draw(s)--(a1)--(a2)--(a3)--(a4)--(a5);
\draw(s)--(s1);
\draw(s)--(s2);
\draw(a1)--(a11);
\draw(a2)--(a21);
\draw(a3)--(a31);
\draw(a4)--(a41);
\draw(a5)--(a51);

\end{tikzpicture}
\caption{The structure of the tree $Q_5$.}
\label{figure-1}
\end{figure}
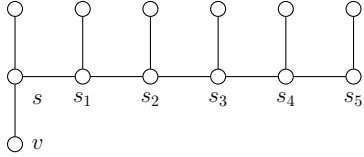

\noindent
We next show that every tree of the family $\mathcal{T}_\beta$ belongs to the family $\mathcal{T}$. To this end, we need the following remark.

\begin{remark}\label{dist-B}
Let $B$ be a $\beta(T)$-set of a tree $T$. Then for every $v\in B$ there exists a vertex $u\in B$ such that $d(v,u)\leq 3$.
\end{remark}

\begin{lemma}\label{lem-left}
If $T\in \mathcal{T}_\beta$, then $T \in \mathcal{T}$.
\end{lemma}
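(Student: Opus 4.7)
The plan is to proceed by induction on $n=|V(T)|$. The base case handles $n=4$ (where $T=P_4$ is in $\mathcal{T}$ by definition, so the claim is trivial) and a short list of small trees on $5\le n\le 7$ vertices which must be checked by hand to ensure none of them lies in $\mathcal{T}_\beta$ except by already being reachable from $P_4$ via a single operation. For the inductive step, I would fix $T\in \mathcal{T}_\beta$ with $n\ge 8$ and (by the assumption $|S(T)|\ge 2$) pick a diametrical path $P\colon v_0v_1v_2\cdots v_d$ with $d=\operatorname{diam}(T)\ge 3$, then root $T$ at $v_d$. My aim is to extract a subtree $T'$ obtained by deleting the ``pendant part'' at the $v_0$-end so that $T$ is produced from $T'$ by one of $O_1,O_2,O_3,O_4$.

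I would split into cases according to the structure near $v_1,v_2,v_3$. Along the way I keep a $\gamma_{t,coi}(T)$-set $D$ that contains no leaves (which exists since $D$ must contain every support) and a $\beta(T)$-set $B=V(T)\setminus D$ containing every leaf. Case (A) is $\deg_T(v_1)\ge 3$: then $v_1$ has a leaf $u\neq v_0$; set $T'=T-u$. I would show that $D$ is still a total co-independent dominating set of $T'$ with $v_1\in D$, that $\beta(T')=\beta(T)-1$, and conclude $T'\in \mathcal{T}_\beta$ and $T$ arises from $T'$ by $O_1$ at $v_1$. Case (B) is $\deg_T(v_1)=\deg_T(v_2)=2$: set $T'=T-\{v_0,v_1\}$. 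Then $D\setminus\{v_1\}$ is a total co-independent dominating set of $T'$ provided $v_2\in N(D\setminus\{v_1\})$, and $\beta(T')=\beta(T)-1$, so $\gamma_{t,coi}(T')=|V(T')|-\beta(T')$ and $T\in\mathcal{T}$ by $O_2$ at $v_2$. Case (C) is $\deg_T(v_1)=2$ and $\deg_T(v_2)\ge 3$: here I split into sub-cases depending on whether the extra neighbors of $v_2$ are leaves, supports of small subtrees, or attachment points of a longer branch; the goal is either to reduce by $O_2$ (when another pendant $P_2$ hangs at $v_2$) or by peeling off a whole $P_4$ via $O_3$ or $O_4$ (when the $v_0$-branch together with $v_1,v_2$ forms the attached $P_4$). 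I would use Remark~\ref{dist-B} on $B$ restricted to the branches at $v_2$ and $v_3$ to force the presence of a suitable independent-set partner at distance $\le 3$, which is what legitimises setting $T'=T-\{v_0,v_1,v_2,\text{a leaf at }v_2\}$ or $T'=T-\{v_0,v_1,v_2,v_3'\}$ for an appropriate leaf $v_3'$.

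The main obstacle, which I would expect to occupy most of the argument, is showing in each case that the attachment vertex of $T'$ lies in some $\gamma_{t,coi}(T')$-set (for $O_1,O_2,O_3$) or in some $\beta(T')$-set (for $O_4$); only then can the reverse operation be applied legally. For $O_1,O_2,O_3$ I would argue contrapositively: if no $\gamma_{t,coi}(T')$-set contains $v$, then every such set is forced to lie in a configuration that, when combined with the pendant branch added to form $T$, produces a total co-independent dominating set of $T$ strictly smaller than $|V(T)|-\beta(T)=\gamma_{t,coi}(T)$, a contradiction. A parallel $\beta$-swapping argument, relying on Remark~\ref{dist-B} to find a partner of $v$ in $B$, handles the $O_4$ case. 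Combined with the easy counts $\gamma_{t,coi}(T')\le \gamma_{t,coi}(T)-c$ and $\beta(T')\ge \beta(T)-c$ (with $c\in\{1,2\}$ depending on the operation), Theorem~\ref{teo1} forces equality and thus $T'\in\mathcal{T}_\beta$, completing the induction.

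Finally, I would double-check that the case split is exhaustive by observing that Case (A)--(C) together cover all possibilities for $\deg_T(v_1)$ and $\deg_T(v_2)$ once $d\ge 3$; in the borderline situation where the $v_0$-branch is forced to be a full $Q_r$-like configuration, the $O_3$/$O_4$ reductions applied to the appropriate $P_4$ subpath of that branch are what get used, and the figure of $Q_5$ is a good guide for organising those sub-cases.
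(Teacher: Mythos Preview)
Your overall strategy---induction on $n$ along a diametrical path with a case split on $\deg_T(v_1)$ and $\deg_T(v_2)$---is close in spirit to the paper's proof (which instead splits first on $|S(T)|$ versus $|L(T)|$ and then on $|SS(T)|$), and your Case~(A) is essentially the paper's Case~1. The genuine problem is Case~(B). When $\deg_T(v_1)=\deg_T(v_2)=2$, the only vertices within distance~$3$ of the leaf $v_0$ are $v_1,v_2,v_3$; since $v_1,v_2\in D$, Remark~\ref{dist-B} applied to $v_0\in B$ forces $v_3\in B$. Thus your proviso ``$v_2\in N(D\setminus\{v_1\})$'' is \emph{never} satisfied, and the reduction by $O_2$ at $v_2$ need not be legal. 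For a concrete failure, apply $O_3$ to $P_4=abcd$ at $b$, obtaining $T\in\mathcal{T}_\beta$ with the pendant path $b\,h_1\,u_1\,u_2\,h_2$; here $(v_0,v_1,v_2,v_3)=(h_2,u_2,u_1,h_1)$, we are in your Case~(B), and one checks that $T'=T-\{h_2,u_2\}$ has $\{b,c,h_1\}$ as its \emph{only} $\gamma_{t,coi}(T')$-set, which does not contain $v_2=u_1$. Hence $T$ cannot be obtained from $T'$ by $O_2$, and your contrapositive argument in the ``main obstacle'' paragraph cannot rescue this: there is nothing to contradict, because $T'\in\mathcal{T}_\beta$ holds and yet no $\gamma_{t,coi}(T')$-set contains $v_2$.

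The paper's Case~3.3.2 shows what must replace your Case~(B). If in addition $\deg_T(v_3)=2$, one removes the four vertices $\{v_0,v_1,v_2,v_3\}$ and recovers $T$ from the smaller tree via $O_3$ at $v_4$; if $\deg_T(v_3)\ge 3$, one argues that the extra neighbour $w$ of $v_3$ lies in $S(T)\setminus S^\ast(T)$, locates a $Q_r$-type configuration hanging at $v_3$, and peels off a pendant $P_2$ there (so $O_2$ is applied at a support vertex of that configuration, not at $v_2$). Your closing remarks about $Q_r$ and $O_3/O_4$ reductions therefore need to be folded into Case~(B), not confined to Case~(C). As written, Case~(B) is a real gap.
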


\begin{proof}
\noindent
We proceed by induction on the order $n\geq 4$ of the trees $T\in \mathcal{T}_\beta$. If $T$ is a double star, then $T$ can be obtained from $P_4$ by repeatedly applying operation $O_1$. This establishes the base case. We assume that $k > 4$ is an integer and that each tree $T' \in \mathcal{T}_\beta$ with $|V(T')|<k$ satisfies that $T'\in \mathcal{T}$.

\noindent
Let $T$ be a tree such that $T\in \mathcal{T}_\beta$ and $|V(T)|=k$. Let $(D,B)$ be a partition of $V(T)$ where $D$ is a $\gamma_{t,coi}(T)$-set containing no leaves and $B$ is a $\beta(T)$-set containing all leaves. We analyze the following situations.\\

\noindent
\textbf{Case 1: $|S(T)|<|L(T)|$.} We consider a support vertex $v$ that is adjacent to at least two leaves. Let $h\in N(v)\cap L(T)$ and $T'=T-h$. Since $h\in B$, it follows $B'=B\setminus \{h\}$ is an independent set in $T'$, which is moreover a $\beta(T')$-set, since otherwise we would get an independent set of larger cardinality in $T$ than $\beta(T)$, which is not possible. Moreover, it is easily seen that $D$ is also a total co-independent dominating set of $T'$. Thus, $\gamma_{t,coi}(T')\le |D|=\gamma_{t,coi}(T)=|V(T)|-\beta(T)=|V(T')|+1-(\beta(T')+1)=|V(T')|-\beta(T')$. So, by Theorem \ref{teo1} , $\gamma_{t,coi}(T')=|V(T')|-\beta(T')$ and $T' \in \mathcal{T}_\beta$. Now, by the induction hypothesis $T' \in \mathcal{T}$. Therefore, since $T$ can be obtained from $T'$ by operation $O_1$, it follows $T \in \mathcal{T}$.\\

\noindent
\textbf{Case 2: $|S(T)|=|L(T)|$ and $|SS(T)|=0$.} In this case we note that $V(T)=S(T)\cup L(T)$ and it is easily observed that $|S(T)|\geq 3$, $S(T)$ is a $\gamma_{t,coi}(T)$-set and $L(T)$ is a $\beta(T)$-set. Let $s\in S(T)$ such that $|N(s)\cap S(T)|=1$ (note that such $s$ always exists) and let $h\in L(T)$ be the leaf adjacent to $s$. First notice that the vertex $x\in N(s)\cap S(T)$ must have at least two neighbors in $S(T)$, otherwise $T$ is $P_4$, which is not possible. Hence, we observe that $B'=B\setminus\{h\}$ is an independent set of $T'=T-\{h,s\}$. Moreover, by a similar reason as in the case above, $B'$ is also a $\beta(T')$-set. On the other hand, $S(T')=S(T) \setminus \{s\}$ is clearly a total co-independent dominating set of $T'$. So, $\gamma_{t,coi}(T')\le |S(T')|=|S(T)|-1=|V(T)|-|L(T)|-1=|V(T')|+2-(|L(T')|+1)-1=|V(T')|-\beta(T')$. Thus, by Theorem \ref{teo1}, $\gamma_{t,coi}(T')=|V(T')|-\beta(T')$ and $T' \in \mathcal{T}_\beta$. By induction hypothesis $T' \in \mathcal{T}$. Since $T$ can be obtained from $T'$ by operation $O_2$, we get $T \in \mathcal{T}$.\\

\noindent
\textbf{Case 3: $|S(T)|=|L(T)|$ and $|SS(T)|>0$.}  Herein we denote by $P(x,y)$ the set of vertices of one shortest path between $x$ and $y$, including $x$ and $y$. Let $h,h'$ be two leaves at the maximum possible distance in $T$ such that there is $v \in SS(T)\cap P(h,h')$ with $d(v,h)=2$ or $d(v,h')=2$. Without loss of generality assume that $d(v,h)=2$ and let $s$ be the support adjacent to $h$. Since $|S(T)|=|L(T)|$, we observe that $N(s)\subseteq S(T)\cup \{h,v\}$ and also every support vertex is adjacent to exactly one leaf. We have now some possible scenarios.\\

\noindent
\textbf{Case 3.1  $|N(s)\cap S(T)|=1$.} Hence, by the maximality of the path $P(h,h')$, it must happen that $T$ has an induced subgraph isomorphic to a graph $Q_r$, as previously described, obtained from the vertices $v, s, h$ and some supports, say $s_1,s_2, \ldots ,s_r\in S(T)$, with the leaves $h_1,h_2,\ldots,h_r$, adjacent to the supports $s_1,s_2, \ldots ,s_r$, respectively.

\noindent
Assume $r=1$. Note that $s,s_1\in D$ and $h,h_1\in B$. We first consider $v\in D$. Let $T'=T-\{h_1,s_1\}$, $D'=D\setminus\{s_1\}$ and $B'=B\setminus\{h_1\}$. Clearly, $B'$ is a $\beta(T')$-set. Also, we note that $D'$ is a total co-independent dominating set of $T'$. So, $\gamma_{t,coi}(T')\le |D'|=|D|-1=(|V(T)|-|B|)-1=(|V(T')|+2)-(\beta(T')+1)-1=|V(T')|-\beta(T')$. Thus, by Theorem \ref{teo1}, $\gamma_{t,coi}(T')=|V(T')|-\beta(T')$ and $T' \in \mathcal{T}_\beta$. By inductive hypothesis $T' \in \mathcal{T}$, and since $T$ can be obtained from $T'$ by operation $O_2$, we obtain $T \in \mathcal{T}$.

We now consider $v\in B$. Let $T'=T-\{s,h,s_1,h_1\}$, $D'=D \setminus \{s, s_1\}$ and $B'=B\setminus\{h,h_1\}$. We can again deduce $B'$ is a $\beta(T')$-set since at most two vertices of $\{s,h,s_1,h_1\}$ can belong to any independent set of $T$. Moreover, $D'$ is a total co-independent dominating set of $T'$. So, $\gamma_{t,coi}(T')\le |D'|=|D|-2=(|V(T)|-|B|)-2=(|V(T')|+4)-(\beta(T')+2)-2=|V(T')|-\beta(T')$ and, by Theorem \ref{teo1}, we get $\gamma_{t,coi}(T')=|V(T')|-\beta(T')$, which leads to $T' \in \mathcal{T}_\beta$. By inductive hypothesis $T' \in \mathcal{T}$, and together with the fact that $T$ can be obtained from $T'$ by operation $O_4$, the required result $T \in \mathcal{T}$ follows.

\noindent
Assume now that $r\geq 2$. Since $s,s_1,\ldots,s_r\in D$ and $h,h_1,\ldots,h_r\in B$, if $T'=T-\{h_r,s_r\}$, then it is readily seen that $B'=B\setminus\{h_r\}$ is a $\beta(T')$-set and that $D'=D \setminus \{s_r\}$ is a total co-independent dominating set of $T'$. By using a similar procedure as above ($r=1$) we obtain $T' \in \mathcal{T}$ and, due to that $T$ can be obtained from $T'$ by operation $O_2$, it follows $T \in \mathcal{T}$.\\

\noindent
\textbf{Case 3.2  $|N(s)\cap S(T)|>1$.} We note that this case is analogous to that one above whether $|N(s)\cap S(T)|=1$ and $r\geq 2$.\\

\noindent
\textbf{Case 3.3: $|N(s)\cap S(T)|=0$.} Clearly, $s$ has degree two since it has one leaf neighbor, no support neighbors and cannot have more than one (it has exactly one) semi-support neighbor due to the maximality of $P(h,h')$. Also, it must happen $v,s\in D$ and $h\in B$. Assume the subgraph induced by $P(h,h')$ is $h\, s\, v\, u_1 u_2\ldots s'\, h'$, where $h, h' \in L(T)$ and $s,s'\in S(T)$. Note that $N(v)\subseteq S(T)\cup \{u_1\}$. We consider again some possible scenarios.\\

\noindent
\textbf{Case 3.3.1: $|N(v)\cap S(T)|>1$.} In this case, the vertex $v$ is also totally dominated by another support different from $s$. We note that $u_1\in B$, because $B\cup\{v\}$ would be an independent set of cardinality larger than that of $B$. Hence, we consider the tree $T'=T-\{h,s\}$ and the sets $D'=D \setminus \{s\}$, $B'=B\setminus\{h\}$. It can be deduced as above that $B'$ is a $\beta(T')$-set. Moreover, $D'$ is a total co-independent dominating set of $T'$. So, $\gamma_{t,coi}(T')\le |D'|=|D|-1=(|V(T)|-|B|)-1=(|V(T')|+2)-(\beta(T')-1)-1=|V(T')|-\beta(T')$. Thus, by Theorem \ref{teo1}, we obtain $\gamma_{t,coi}(T')=|V(T')|-\beta(T')$, which means $T' \in \mathcal{T}_\beta$, and by inductive hypothesis $T' \in \mathcal{T}$. Furthermore $T$ can be obtained from $T'$ by operation $O_2$, which allows to claim $T \in \mathcal{T}$.\\

\noindent
\textbf{Case 3.3.2: $|N(v)\cap S(T)|=1$.} Clearly $s,v$ have degree two. We firstly consider the case when $N(u_1)=\{v,u_2\}$. By Remark \ref{dist-B} we note that $u_1\in B$, and consequently $u_2\in D$. Let $T'=T-\{h,s,v,u_1\}$, $D'=D \setminus\{v,s\}$ and $B'=B\setminus\{h,u_1\}$. Now, similarly to Case 3.1 (whether $r=1$ and $v\in B$) we can deduce $B'$ is a $\beta(T')$-set. Also, $D'$ is a total co-independent dominating set of $T'$. So, $\gamma_{t,coi}(T')\le |D'|=|D|-2=(|V(T)|-|B|)-2=(|V(T')|+4)-(\beta(T')+2)-2=|V(T')|-\beta(T')$, and, by Theorem \ref{teo1}, $\gamma_{t,coi}(T')=|V(T')|-\beta(T')$. Thus, $T' \in \mathcal{T}_\beta$. Hence, the inductive hypothesis $T' \in \mathcal{T}$ together with the fact that $T$ can be obtained from $T'$ by operation $O_3$, we deduce $T\in \mathcal{T}$.

\noindent
On the other hand, assume that there is a vertex $w\in N(u_1)\setminus\{v,u_2\}$. Since $u_1\in B$ must happen, it must be also $w\in D$, and consequently $w$ has a neighbor belonging to $D$ (which is clearly not $u_1$). Moreover,  by the maximality of $P(h',h)$, we note that $w\notin SS(T)$ because $B' = B\setminus\{u_1\} \cup \{v,w\}$ is an independent set of cardinality larger than that of $B$. Hence, it follows $w\in S(T)\setminus S^\ast(T)$.

\noindent
Let $x\in V(T)$ be a leaf adjacent to the support $w$. We note that $T$ should have an induced subgraph isomorphic to a graph $Q_r$, obtained from the vertices $u_1, w, x$ and some supports, say $w_1,w_2, \ldots w_r\in S(T)$, with the leaves $x_1,x_2,\ldots,x_r$, adjacent to the supports $w_1,w_2, \ldots w_r$, respectively. By using a similar procedure to that of Case $3.1$, it follows $T \in \mathcal{T}$ which completes the proof.
\end{proof}

\noindent
As an immediate consequence of Lemma \ref{lem-right} and Lemma \ref{lem-left} we have the following characterization.

\begin{theorem}\label{teo-lower}
Let $T$ be a tree. Then $T\in \mathcal{T}_\beta$ if and only if $T \in \mathcal{T}$.
\end{theorem}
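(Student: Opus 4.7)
The plan is simply to chain together the two preceding lemmas. Since Lemma~\ref{lem-right} establishes the implication ``$T\in\mathcal{T}\Rightarrow T\in\mathcal{T}_\beta$'' and Lemma~\ref{lem-left} establishes its converse ``$T\in\mathcal{T}_\beta\Rightarrow T\in\mathcal{T}$'', their conjunction is precisely the biconditional claimed in the theorem. So the proof I would write consists of essentially one line: the result follows by combining Lemma~\ref{lem-right} and Lemma~\ref{lem-left}. All substantive work has already been packed into those two lemmas.

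If I wanted to give the reader a little more orientation, I would add a short paragraph recalling the complementary roles of the two directions. Lemma~\ref{lem-right} shows, by induction on the number $r(T)$ of operations used to build $T$ from $P_4$, that each of the four attaching operations $O_1,O_2,O_3,O_4$ preserves the equality $\gamma_{t,coi}(T)=|V(T)|-\beta(T)$, hence every member of $\mathcal{T}$ lies in $\mathcal{T}_\beta$. Lemma~\ref{lem-left} runs the process in reverse by induction on $|V(T)|$: given a tree with $\gamma_{t,coi}(T)=|V(T)|-\beta(T)$ and $|V(T)|>4$, one identifies a leaf, a pendant $P_2$, or an appropriate $P_4$-piece whose removal yields a smaller tree $T'$ still in $\mathcal{T}_\beta$, and the attachment that recovers $T$ from $T'$ is exactly one of $O_1,\dots,O_4$.

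Consequently, there is no real obstacle to overcome in Theorem~\ref{teo-lower} itself. The only conceptual point worth flagging is that the four operations used in the forward construction must match, as inverse reductions, the four cases used in the backward argument; this matching was verified inside the two lemmas, so the theorem requires no new idea or calculation.
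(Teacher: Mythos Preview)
Your proposal is correct and matches the paper's approach exactly: the paper also presents this theorem as an immediate consequence of Lemma~\ref{lem-right} and Lemma~\ref{lem-left}, with no additional argument. Your extra orientation paragraph is more than the paper provides but is accurate and harmless.
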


\noindent
We next see that all the operations $O_1$ to $O_4$ are required in the characterization above. First, it is easy to see that operation $O_1$ is required to obtain a double star from the path $P_4$. The examples given in Figure \ref{figure-2} show that operations $O_2$, $O_3$ and $O_4$ are also required.

\begin{figure}[ht]
\centering
\begin{tikzpicture}[scale=.5, transform shape]
\node [draw, shape=circle] (s1) at  (-1.5,0) {};
\node [draw, shape=circle] (s2) at  (0,0) {};
\node [draw, shape=circle] (s3) at  (1.5,0) {};
\node [draw, shape=circle] (s4) at  (3,0) {};

\node [draw, shape=circle] (s21) at  (0,-1.5) {};
\node [draw, shape=circle] (s22) at  (0,-3) {};
\node [draw, shape=circle] (s23) at  (0,-4.5) {};
\node [draw, shape=circle] (s24) at  (0,-6) {};

\node [draw, shape=circle] (s31) at  (1.5,-1.5) {};
\node [draw, shape=circle] (s32) at  (1.5,-3) {};
\node [draw, shape=circle] (s33) at  (1.5,-4.5) {};
\node [draw, shape=circle] (s34) at  (1.5,-6) {};

\draw(s1)--(s2)--(s3)--(s4);

\draw(s2)--(s21)--(s22)--(s23)--(s24);

\draw(s3)--(s31)--(s32)--(s33)--(s34);

\node [left] at (-1.3,-5.25) {\Large(I)};

\end{tikzpicture}
\hspace*{0.7cm}
\begin{tikzpicture}[scale=.5, transform shape]
\node [draw, shape=circle] (s1) at  (0,0) {};
\node [draw, shape=circle] (s2) at  (1.5,0) {};
\node [draw, shape=circle] (s3) at  (3,0) {};
\node [draw, shape=circle] (s4) at  (-1.5,0) {};
\node [draw, shape=circle] (s5) at  (-3,0) {};
\node [draw, shape=circle] (s6) at  (-4.5,0) {};

\node [draw, shape=circle] (s21) at  (1.5,1.5) {};
\node [draw, shape=circle] (s31) at  (3,1.5) {};

\node [draw, shape=circle] (s11) at  (0,-1.5) {};
\node [draw, shape=circle] (s12) at  (0,-3) {};
\node [draw, shape=circle] (s111) at  (-1.5,-1.5) {};
\node [draw, shape=circle] (s1111) at  (-1.5,-3) {};

\draw(s1)--(s2)--(s3);
\draw(s1)--(s4)--(s5)--(s6);

\draw(s2)--(s21);
\draw(s3)--(s31);

\draw(s1)--(s11)--(s12);

\draw(s11)--(s111)--(s1111);

\node [left] at (-2.8,-2.25) {\Large(II)};

\end{tikzpicture}
\hspace*{0.7cm}
\begin{tikzpicture}[scale=.5, transform shape]
\node [draw, shape=circle] (s1) at  (0,0) {};
\node [draw, shape=circle] (s2) at  (0,1.5) {};
\node [draw, shape=circle] (s3) at  (0,3) {};
\node [draw, shape=circle] (s4) at  (0,4.5) {};
\node [draw, shape=circle] (s5) at  (0,6) {};

\node [draw, shape=circle] (s31) at  (-1.5,3) {};

\draw(s1)--(s2)--(s3)--(s4)--(s5);
\draw(s3)--(s31);

\node [left] at (-1.3,0.75) {\Large(III)};
\end{tikzpicture}

\caption{The tree (I) can only be obtained from $P_4$ by a sequence of operations $O_3, O_3$ or $O_4, O_3$; the tree (II) can only be obtained from $P_4$ by a sequence of operations $O_4, O_4$ or $O_3,O_4$ and the tree (III) can only be obtained from $P_4$ by the operation $O_2$.}
\label{figure-2}
\end{figure}
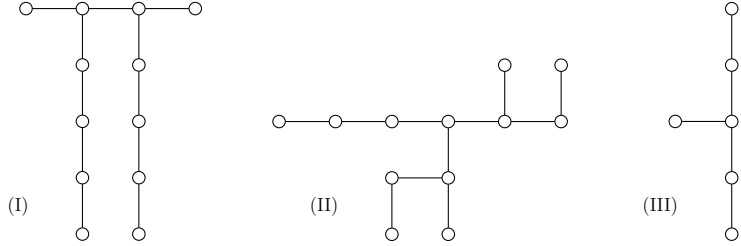

\noindent
Now, in order to characterize the trees $T$ such that $T\in \mathcal{T}_L$, we need some primary results, which we next present.

\begin{theorem}\label{teo-min}{\em \cite{Soner2012}}
A total co-independent dominating set $D$ of a graph $G$ is minimal if and only if for each vertex $v\in D$, one of the following conditions is satisfied.
\begin{itemize}
  \item[\em{(i)}] There exists a vertex $u \in V(G)$ such that $ N(u)\cap D=\{v\}$.
  \item[\em{(ii)}] There exists $w\in V(G)\setminus D$ adjacent to $v$.
\end{itemize}
\end{theorem}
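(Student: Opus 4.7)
The plan is to unwind the definition of minimality vertex by vertex and track which of the three defining properties of a total co-independent dominating set fails when a chosen $v\in D$ is removed. Write $D'=D\setminus\{v\}$. The complement $V(G)\setminus D'=(V(G)\setminus D)\cup\{v\}$ automatically contains $v$, so non-emptiness of the complement is preserved; therefore only the total domination property or the independence of the complement can be responsible for $D'$ failing to be a total co-independent dominating set. The entire proof will consist of matching each of these two possible failures with one of the conditions (i) or (ii).

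For the forward direction I would fix an arbitrary $v\in D$ and use minimality to conclude that $D'$ is not a total co-independent dominating set. If $D'$ fails to be a total dominating set, then some vertex $u\in V(G)$ satisfies $N(u)\cap D'=\emptyset$; since $D$ totally dominates $u$, this forces $N(u)\cap D=\{v\}$, giving condition (i). Otherwise the only remaining defect is that the complement $(V(G)\setminus D)\cup\{v\}$ is no longer independent, and since $V(G)\setminus D$ is independent by hypothesis, any new edge must be incident with $v$ and have its other endpoint in $V(G)\setminus D$, which is precisely condition (ii).

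For the converse I would argue contrapositively. Suppose $D$ is not minimal, so there is some $v\in D$ for which $D\setminus\{v\}$ is still a total co-independent dominating set. Then every vertex has a neighbor in $D\setminus\{v\}$, which rules out the existence of a $u$ with $N(u)\cap D=\{v\}$, killing (i); and the complement $(V(G)\setminus D)\cup\{v\}$ is independent, meaning $v$ has no neighbor in $V(G)\setminus D$, killing (ii). Thus the hypothesis fails at $v$, completing the contrapositive.

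The argument is essentially pure definition-pushing, so there is no substantive obstacle; the only point worth articulating carefully is the opening observation that removing a single vertex from $D$ cannot destroy the non-emptiness of the complement, which is what allows the case analysis to be reduced to exactly the two alternatives matching (i) and (ii).
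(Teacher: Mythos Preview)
The paper does not supply its own proof of this theorem; it is quoted from \cite{Soner2012} without argument and used only as a tool in the subsequent lemma. Your proof is correct and is the natural definition-unwinding one would expect: the opening observation that non-emptiness of the complement survives removal of any single vertex is exactly what collapses the failure analysis to the two alternatives matching (i) and (ii), and both directions then go through cleanly.
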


\begin{lemma}\label{lem-upper}
If $T\in \mathcal{T}_L$ and $S^\ast(T)\neq \emptyset$, then $SS(T)\subseteq N(S^\ast(T))$.
\end{lemma}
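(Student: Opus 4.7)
The plan is to argue by contradiction. Suppose some $v\in SS(T)\setminus N(S^\ast(T))$ exists, i.e.\ a semi-support vertex none of whose support neighbors is isolated in the subgraph induced by $S(T)$. I will exhibit a total co-independent dominating set of cardinality $n-|L(T)|-1$, which contradicts $T\in \mathcal{T}_L$.

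The candidate is $D=V(T)\setminus (L(T)\cup\{v\})$, obtained from the canonical total co-independent dominating set $V(T)\setminus L(T)$ used in Theorem~\ref{teo1} by deleting $v$. The set $V(T)\setminus D=L(T)\cup\{v\}$ is independent: since $diam(T)\geq 3$ no two leaves of $T$ are adjacent, and $v\in SS(T)$ is neither a leaf nor a support, so $v$ has no leaf neighbor. Non-emptiness of $V(T)\setminus D$ is clear, and non-emptiness of $D$ follows from the standing assumption $|S(T)|\geq 2$ together with $S(T)\subseteq D$.

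The core of the proof is to verify that $D$ is still a total dominating set. Leaves are dominated by their supports, all of which lie in $D$, and $v$ itself is dominated by any of its support neighbors, which lie in $D$ since $v\notin S(T)$. For a vertex $u\in D$ I would split into three cases. If $u\in S(T)\cap N(v)$, then the assumption $v\notin N(S^\ast(T))$ gives $u\notin S^\ast(T)$, so $u$ has a second support neighbor $u'\neq v$, which is in $D$. If $u\in S(T)\setminus N(v)$, then (as in the proof of Theorem~\ref{teo1}) $u$ already possesses a non-leaf neighbor, and that neighbor is not $v$ by hypothesis. If $u\notin S(T)$, then every neighbor of $u$ is a non-leaf; since $u$ is itself a non-leaf of a connected tree we have $\deg(u)\geq 2$, so $u$ has some neighbor other than $v$, which lies in $D$.

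I expect the main obstacle to be exactly the first case: a support neighbor $u$ of $v$ could, a priori, have $v$ as its only non-leaf neighbor, and removing $v$ from $V(T)\setminus L(T)$ would then destroy total domination. The hypothesis $v\notin N(S^\ast(T))$ is tailored precisely to exclude this possibility by furnishing a second support neighbor of $u$ that remains in $D$. Once the case analysis is complete, $D$ is a total co-independent dominating set of cardinality $n-|L(T)|-1$, contradicting $\gamma_{t,coi}(T)=n-|L(T)|$, and the lemma follows.
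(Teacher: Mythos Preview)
Your argument is correct. It is the contrapositive of the paper's proof, which instead invokes the minimality criterion of Theorem~\ref{teo-min}: since $D=V(T)\setminus L(T)$ is a $\gamma_{t,coi}(T)$-set, it is minimal, and for $v\in SS(T)$ condition~(ii) of that theorem fails ($v$ has no leaf neighbor), so condition~(i) supplies a vertex $u$ with $N(u)\cap D=\{v\}$; one then observes that all neighbors of $u$ other than $v$ are leaves, forcing $u\in S^\ast(T)$ and hence $v\in N(S^\ast(T))$. Your explicit construction of the smaller total co-independent dominating set $D\setminus\{v\}$ is exactly what the minimality criterion packages abstractly, so the two proofs are the same in substance. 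Yours is self-contained and slightly more elementary (it does not need Theorem~\ref{teo-min}), while the paper's version is shorter once that theorem is in hand. Incidentally, neither argument actually uses the hypothesis $S^\ast(T)\neq\emptyset$; the inclusion $SS(T)\subseteq N(S^\ast(T))$ is vacuous when $SS(T)=\emptyset$.
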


\begin{proof}
\noindent
Let $D$ be a $\gamma_{t,coi}(T)$-set containing no leaves. Clearly, $T\in \mathcal{T}_L$ implies that every vertex of $T$ is either a leaf or belongs to $D$. Note that if $S^\ast(T)\neq \emptyset$, then $SS(T)\neq \emptyset$. Let $v\in SS(T)$. Since $D$ is a $\gamma_{t,coi}(T)$-set and $v$ is not adjacent to a leaf, by Theorem \ref{teo-min}, there exists a vertex $u\in D$ such that $ N(u)\cap D=\{v\}$. Hence, it is easily observed that $u\in S^\ast(T)$. Therefore, $v\in  N(S^\ast(T))$, which completes the proof.
\end{proof}

\begin{corollary}\label{cor-upper}
If $T\in \mathcal{T}_L$ and $S^\ast(T)=\emptyset$, then $V(T)=L(T)\cup S(T)$.
\end{corollary}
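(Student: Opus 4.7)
The plan is to argue by contradiction, exploiting Theorem \ref{teo-min} together with the very restrictive structure that the hypothesis $T \in \mathcal{T}_L$ imposes on any $\gamma_{t,coi}(T)$-set. I would assume for contradiction that there exists a vertex $v \in V(T) \setminus (L(T) \cup S(T))$, that is, a non-leaf vertex with no leaf neighbor, and then deduce the existence of an isolated support vertex, contradicting $S^\ast(T) = \emptyset$.

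For the setup, fix a $\gamma_{t,coi}(T)$-set $D$ containing no leaves, as one always exists. Since $|D| = \gamma_{t,coi}(T) = |V(T)| - |L(T)|$ and $D \cap L(T) = \emptyset$, I would deduce that $D = V(T) \setminus L(T)$; in particular, $v \in D$ and $V(T) \setminus D = L(T)$. This identification is the key driver of the whole argument.

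Now I would apply Theorem \ref{teo-min} to $v$, using that a minimum total co-independent dominating set is minimal. Condition (ii) of that theorem would require $v$ to be adjacent to a vertex of $V(T) \setminus D = L(T)$, placing $v \in S(T)$ and contradicting our choice of $v$. So condition (i) must hold: there exists $u \in V(T)$ with $N(u) \cap D = \{v\}$. If $u \in L(T)$, then $u$ is a leaf whose unique neighbor is $v$, again forcing $v \in S(T)$; hence $u \in D$, so $u$ is not a leaf.

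Finally, I would unpack what $N(u) \cap D = \{v\}$ says about $u$: every neighbor of $u$ other than $v$ lies in $V(T) \setminus D = L(T)$. Since $u$ is not a leaf, it has degree at least two, so $u$ has at least one leaf neighbor, whence $u \in S(T)$. On the other hand, no neighbor of $u$ can lie in $S(T)$, because neighbors in $L(T)$ are leaves and the only neighbor of $u$ in $D$ is $v$, which by assumption is not a support vertex. Therefore $u$ is isolated in the subgraph induced by $S(T)$, i.e., $u \in S^\ast(T)$, contradicting $S^\ast(T) = \emptyset$. No serious obstacle is anticipated; the only subtlety is recognizing that once $V(T) \setminus D = L(T)$ is established, the minimality criterion Theorem \ref{teo-min} collapses both of its cases immediately into a contradiction.
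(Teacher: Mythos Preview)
Your proof is correct and follows essentially the same approach as the paper. The paper states this result as a corollary with no explicit proof, relying on the argument of Lemma~\ref{lem-upper}; your contradiction argument is precisely that argument, applied to an arbitrary $v\in V(T)\setminus(L(T)\cup S(T))$ rather than to $v\in SS(T)$, and it uses Theorem~\ref{teo-min} in the same way to produce a vertex $u\in S^\ast(T)$.
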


The next theorem contains a characterization for those trees satisfying $T\in \mathcal{T}_L$.

\begin{theorem}\label{teo-upper}
Let $T$ be any tree of order $n$ and $diam(T)\ge 3$. Then $T\in \mathcal{T}_L$ if and only if $V(T)=L(T)\cup S(T)\cup SS(T)$ and $SS(T)\subseteq N(S^\ast(T))$.
\end{theorem}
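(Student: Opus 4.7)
The plan is to prove the biconditional by handling the two implications separately. The necessity direction is largely a cleanup of the earlier Lemma~\ref{lem-upper} and Corollary~\ref{cor-upper}, supplemented by a short case analysis based on Theorem~\ref{teo-min}. The sufficiency direction, which matches the upper bound provided by Theorem~\ref{teo1} with a corresponding lower bound on $\gamma_{t,coi}(T)$, is where the main structural argument lies.

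For the necessity ($\Rightarrow$), suppose $T\in \mathcal{T}_L$. If $S^\ast(T)=\emptyset$, Corollary~\ref{cor-upper} already yields $V(T)=L(T)\cup S(T)$, whence $SS(T)=\emptyset$ and both conditions hold trivially. If $S^\ast(T)\neq\emptyset$, Lemma~\ref{lem-upper} directly supplies $SS(T)\subseteq N(S^\ast(T))$, so the only remaining task is to verify $V(T)=L(T)\cup S(T)\cup SS(T)$. I would fix a $\gamma_{t,coi}(T)$-set $D$ without leaves (obtained by standard leaf-stripping, since any leaf in such a minimum set can be removed without breaking total domination or co-independence). Because $|D|=n-|L(T)|$ and $L(T)\subseteq V(T)\setminus D$, cardinality comparison forces $V(T)\setminus D=L(T)$, putting every non-leaf into $D$. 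For a hypothetical $v\in D\setminus(S(T)\cup SS(T))$, apply Theorem~\ref{teo-min}: condition (ii) would give $v$ a leaf neighbor, so $v\in S(T)$, a contradiction; condition (i) supplies $u$ with $N(u)\cap D=\{v\}$, and a short case analysis on whether $u$ is a leaf (making $v$ its support), or a non-leaf whose only non-leaf neighbor is $v$ with additional leaf neighbors (making $u\in S^\ast(T)$ and hence $v\in SS(T)$), or a non-leaf with no leaf neighbors (forcing $u$ to itself be a leaf and contradicting $u\in D$), produces a contradiction in every case.

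For the sufficiency ($\Leftarrow$), Theorem~\ref{teo1} gives $\gamma_{t,coi}(T)\leq n-|L(T)|$, so I need to prove the matching lower bound $\gamma_{t,coi}(T)\geq n-|L(T)|$. Let $D$ be any $\gamma_{t,coi}(T)$-set, assumed without leaves. Every support lies in $D$ because each leaf's unique neighbor must be in $D$ for total domination; together with the decomposition $V(T)=L(T)\cup S(T)\cup SS(T)$, this gives $V(T)\setminus D\subseteq L(T)\cup SS(T)$, and the question reduces to showing $SS(T)\subseteq D$. For $v\in SS(T)$, invoke $SS(T)\subseteq N(S^\ast(T))$ to select $s\in S^\ast(T)\cap N(v)$; because $s$ is isolated in the support subgraph and all non-leaves lie in $S(T)\cup SS(T)$, every non-leaf neighbor of $s$ is a semi-support, and total domination of $s\in D$ by a non-leaf of $D$ forces at least one semi-support neighbor of $s$ into $D$. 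The main obstacle I anticipate is closing the argument that $v$ itself must lie in $D$: the intended route is to apply Theorem~\ref{teo-min} to $s$ (and to the semi-support neighbors of $s$ that do lie in $D$) and to use the independence of $V(T)\setminus D$ to rule out the possibility of a size-reducing swap, thereby contradicting the minimality of $D$ whenever $v\notin D$. Once $SS(T)\subseteq D$ is established, $D\supseteq S(T)\cup SS(T)$ and hence $|D|\geq n-|L(T)|$, finishing the sufficiency and the characterization.
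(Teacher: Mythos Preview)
Your necessity direction is correct and parallels the paper's argument: both invoke Corollary~\ref{cor-upper} and Lemma~\ref{lem-upper}, and your use of Theorem~\ref{teo-min} to rule out vertices outside $L(T)\cup S(T)\cup SS(T)$ is a valid, slightly more detailed version of the paper's one-line claim that $D\setminus(SS(T)\cup S(T))=\emptyset$.

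The sufficiency direction, however, contains a genuine gap at exactly the point you flagged, and it cannot be closed along the route you propose. Your plan requires that every leaf-free $\gamma_{t,coi}(T)$-set $D$ contain all of $SS(T)$, but this is false under the stated hypotheses. Consider the tree on twelve vertices with edge set
\[
\ell s,\ s v_1,\ s v_2,\ v_1 s_1,\ s_1 \ell_1,\ s_1 s_1',\ s_1'\ell_1',\ v_2 s_2,\ s_2 \ell_2,\ s_2 s_2',\ s_2'\ell_2'.
\]
Here $L(T)=\{\ell,\ell_1,\ell_1',\ell_2,\ell_2'\}$, $S(T)=\{s,s_1,s_1',s_2,s_2'\}$, $SS(T)=\{v_1,v_2\}$ and $S^\ast(T)=\{s\}$, so both hypotheses $V(T)=L(T)\cup S(T)\cup SS(T)$ and $SS(T)\subseteq N(S^\ast(T))$ hold. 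Yet $D=\{s,s_1,s_1',s_2,s_2',v_1\}$ is a leaf-free total co-independent dominating set of size $6<7=n-|L(T)|$, with the semi-support $v_2\notin D$; thus $T\notin\mathcal{T}_L$ and no minimality or swap argument via Theorem~\ref{teo-min} can force $v_2\in D$. The paper's own proof of this implication is the single phrase ``it is readily seen,'' so you are not overlooking a technique supplied there; rather, the condition $SS(T)\subseteq N(S^\ast(T))$ is too weak as written. What Lemma~\ref{lem-upper} actually proves---and what the converse genuinely needs---is the stronger statement that each semi-support is the \emph{unique} non-leaf neighbour of some isolated support, under which your argument for $SS(T)\subseteq D$ goes through immediately.
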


\begin{proof}
\noindent
We first assume that $T\in \mathcal{T}_L$ and, since $diam(T)\ge 3$, let $D$ be a $\gamma_{t,coi}(T)$-set containing no leaves. If $S^\ast(T)=\emptyset$, then, by Corollary \ref{cor-upper}, it follows that $V(T)=L(T)\cup S(T)$. If $S^\ast(T)\neq \emptyset$, then, by using Lemma \ref{lem-upper}, we get that $SS(T)\subseteq N(S^\ast(T))$. Hence, we note that $D\setminus(SS(T)\cup S(T))$ is empty. Otherwise, there exists $v\in N(SS(T))\setminus S(T)$ such that $D\setminus\{v\}$ is a total co-independent dominating set, which contradicts the fact that $T\in \mathcal{T}_L$. Therefore $V(T)=L(T)\cup S(T)\cup SS(T)$ and $SS(T)\subseteq N(S^\ast(T))$.

\noindent
On the other hand, if we consider that $V(T)=L(T)\cup S(T)\cup SS(T)$ and $SS(T)\subseteq N(S^\ast(T))$, then it is readily seen that $T\in \mathcal{T}_L$, which completes the proof.
\end{proof}

\noindent
An interesting question that arises from the Theorems \ref{teo-lower} and \ref{teo-upper} is the following. Can the differences $\gamma_{t,coi}(T)-(|V(T)|-\beta(T))$ and $(|V(T)|-|L(T)|)-\gamma_{t,coi}(T)$ be as large as possible? We next give an affirmative answer to that question. In this sense, the following family of trees $\mathcal{F}$ is required. Given two integers $b,d$, a tree $T_{b,d}\in \mathcal{F}$ is defined as follows.
\begin{itemize}
  \item We begin with a tree $T$ of order $n=b+d$ with vertex set $V(T)=\{u_1,\ldots,u_b,v_1,\ldots,v_d\}$.
  \item Attach two paths $P_1$ to every vertex of $T$.
  \item Attach a star $S_3$ to every vertex $u_i\in V(T)$, $i\in \{1,\ldots,b\}$, by adding an edge between $u_i$ and a leaf of the star $S_3$.
  \item Attach a star $S_3$ with a subdivided edge to every vertex $v_i\in V(T)$, $i\in \{1,\ldots,d\}$, by adding an edge between $v_i$ and the leaf corresponding to the subdivided edge.
\end{itemize}
An example of a tree of the family $\mathcal{F}$ is given in Figure \ref{family-F-fig}.
\begin{figure}[h]
\centering
\hspace*{0.7cm}
\begin{tikzpicture}[scale=.5, transform shape]
\node [draw, shape=circle] (u1) at  (1.5,-2) {};
\node [draw, shape=circle] (u2) at  (5.5,-2) {};
\node [draw, shape=circle] (v1) at  (9.5,-2) {};
\node [draw, shape=circle] (v2) at  (13.5,-2) {};
\node [draw, shape=circle] (v3) at  (17.5,-2) {};

\node [draw, shape=circle] (s1) at  (0,0) {};
\node [draw, shape=circle] (s2) at  (1.5,0) {};
\node [draw, shape=circle] (s3) at  (3,0) {};
\node [draw, shape=circle] (s4) at  (4,0) {};
\node [draw, shape=circle] (s5) at  (5.5,0) {};
\node [draw, shape=circle] (s6) at  (7,0) {};
\node [draw, shape=circle] (s7) at  (8,0) {};
\node [draw, shape=circle] (s8) at  (9.5,0) {};
\node [draw, shape=circle] (s9) at  (11,0) {};
\node [draw, shape=circle] (s10) at  (12,0) {};
\node [draw, shape=circle] (s11) at  (13.5,0) {};
\node [draw, shape=circle] (s12) at  (15,0) {};
\node [draw, shape=circle] (s13) at  (16,0) {};
\node [draw, shape=circle] (s14) at  (17.5,0) {};
\node [draw, shape=circle] (s15) at  (19,0) {};

\node [draw, shape=circle] (r1) at  (9.5,2) {};
\node [draw, shape=circle] (r2) at  (13.5,2) {};
\node [draw, shape=circle] (r3) at  (17.5,2) {};

\node [draw, shape=circle] (ss1) at  (0,4) {};
\node [draw, shape=circle] (ss2) at  (1.5,4) {};
\node [draw, shape=circle] (ss3) at  (3,4) {};
\node [draw, shape=circle] (ss4) at  (4,4) {};
\node [draw, shape=circle] (ss5) at  (5.5,4) {};
\node [draw, shape=circle] (ss6) at  (7,4) {};
\node [draw, shape=circle] (ss7) at  (8,4) {};
\node [draw, shape=circle] (ss8) at  (9.5,4) {};
\node [draw, shape=circle] (ss9) at  (11,4) {};
\node [draw, shape=circle] (ss10) at  (12,4) {};
\node [draw, shape=circle] (ss11) at  (13.5,4) {};
\node [draw, shape=circle] (ss12) at  (15,4) {};
\node [draw, shape=circle] (ss13) at  (16,4) {};
\node [draw, shape=circle] (ss14) at  (17.5,4) {};
\node [draw, shape=circle] (ss15) at  (19,4) {};

\draw(u1)--(u2)--(v1)--(v2)--(v3);

\draw(s1)--(u1)--(s3);
\draw(s4)--(u2)--(s6);
\draw(s7)--(v1)--(s9);
\draw(s10)--(v2)--(s12);
\draw(s13)--(v3)--(s15);

\draw(ss1)--(ss2)--(ss3);
\draw(ss4)--(ss5)--(ss6);
\draw(ss7)--(ss8)--(ss9);
\draw(ss10)--(ss11)--(ss12);
\draw(ss13)--(ss14)--(ss15);

\draw(u1)--(s2)--(ss2);
\draw(u2)--(s5)--(ss5);
\draw(v1)--(s8)--(r1)--(ss8);
\draw(v2)--(s11)--(r2)--(ss11);
\draw(v3)--(s14)--(r3)--(ss14);

\node [below] at (1.5,-2.5) {\Large $u_1$};
\node [below] at (5.5,-2.5) {\Large $u_2$};
\node [below] at (9.5,-2.5) {\Large $v_1$};
\node [below] at (13.5,-2.5) {\Large $v_2$};
\node [below] at (17.5,-2.5) {\Large $v_3$};

\end{tikzpicture}
\caption{The tree $T_{2,3}$ by taking $T$ as the path $P_5$.}
\label{family-F-fig}
\end{figure}
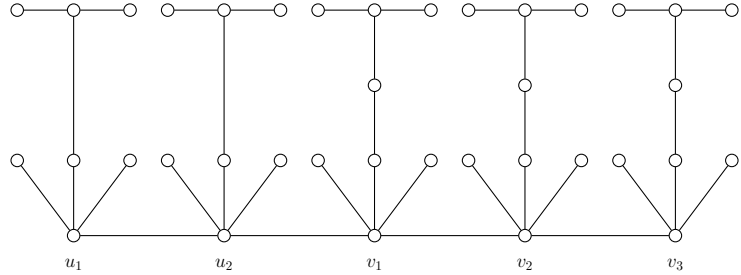
\noindent
We next give several properties of the trees of the family $\mathcal{F}$, which are almost straightforward to observe and, according to this fact, the proofs are left to the reader.

\begin{remark}\label{family-F}
Let $b,d$ be any two positive integers. Then,
\begin{itemize}
  \item[{\em (i)}] $T_{b,d}$ has order $3n+4b+5d$,
  \item[{\em (ii)}] $T_{b,d}$ has $2n+2b+2d$ leaves,
  \item[{\em (iii)}] $\beta(T_{b,d})=2n+3b+3d$,
  \item[{\em (iv)}] $\gamma_{t,coi}(T_{b,d})=n+2b+2d$.
\end{itemize}
\end{remark}

According to the results above, for any positive integers $b,d$ we see that the tree $T_{b,d}\in \mathcal{F}$ satisfies
$$\gamma_{t,coi}(T)-(|V(T)|-\beta(T))=b\;\;\mbox{ and }\;\;(|V(T)|-|L(T)|)-\gamma_{t,coi}(T)=d,$$
which gives answer to our previous question.

\section*{Acknowledgement}

We want to thank the reviewer of this article for the useful comments that helped us to improve our work.

\end{document}